\def\m{\mathfrak m}
\def\p{\mathfrak p}
\def\w{\omega}
\def\D{{\mathcal D}}
\def\L{{\mathcal L}}
\def\Hom{\mathrm{Hom}}
\def\RHom{\mathrm{RHom}}
\def\pHom{\underline{\mathrm{Hom}}}
\def\Lotimes{{\, {}^{\mathrm L}\otimes}}
\def\Ker{\mathrm{Ker}}
\def\Cok{\mathrm{Coker}}
\def\Im{\mathrm{Im}}
\def\tr{\mathrm{Tr}}
\def\depth{\mathrm{depth}}
\def\Ext{\mathrm{Ext}}
\def\Tor{\mathrm{Tor}}
\def\CM{\mathrm{CM}}
\def\pCM{\underline{\mathrm{CM}}}
\def\NF{\mathrm{NF}}
\def\RGamma{{\mathrm{R}\Gamma}}
\def\Spec{\mathrm{Spec}}
\def\Supp{\mathrm{Supp}}
\def\supp{\mathrm{supp}}
\def\mono{\hookrightarrow}
\theoremstyle{plain} % \UTF{0083}C\UTF{0083}^\UTF{0083}\UTF{008A}\UTF{0083}b\UTF{0083}N\UTF{0091}\UTF{00CC}%
\newtheorem{theorem}{\bfseries Theorem}[section]
\newtheorem{lemma}[theorem]{\textbf Lemma}
\newtheorem{corollary}[theorem]{\textbf Corollary}
\newtheorem{proposition}[theorem]{\textbf Proposition}
\theoremstyle{definition} % \UTF{0083}\UTF{008D}\UTF{0081}[\UTF{0083}}\UTF{0083}\UTF{0093}\UTF{0091}\UTF{00CC}%
\newtheorem{definition}[theorem]{\textbf Definition}
\newtheorem{remark}[theorem]{\textbf Remark}
\newtheorem{question}[theorem]{\textbf Question}
\newtheorem{conjecture}[theorem]{\textbf Conjecture}
\theoremstyle{plain}
\begin{document}

%%%%%%%%%%%%%%%%%%%%%%%%%%%%%%%%%%%%%%%%%%%%%%%%%%%%%%%%
%%%%%%%%%%%%%%%%%%%%%% Title %%%%%%%%%%%%%%%%%%%%%%%%%%%
%%%%%%%%%%%%%%%%%%%%%%%%%%%%%%%%%%%%%%%%%%%%%%%%%%%%%%%%
\title{An Auslander-Reiten principle in derived categories}
\author{Maiko Ono}
\author{Yuji Yoshino}

\address{Department of Mathematics, Okayama University, 700-8530, Okayama, Japan.}
\email{ptr56u9x@s.okayama-u.ac.jp}

\address{Department of Mathematics, Okayama University, 700-8530, Okayama, Japan.}
\email{yoshino@math.okayama-u.ac.jp}
\date{\today}
\footnotetext{2000 {\it Mathematics Subject Classification}. 
Primary 13D09  ; Secondary 13C14,16G50. } 
\footnotetext{{\it Key words and phrases\/} Derived category, Auslander-Reiten duality, Local duality, Auslander-Reiten conjecture. }
%\medskip

\pagestyle{plain}

%%%%%%%%%%%%%%%%%%%%%%%%%%%%%%%%%%%%%%%%%%%%%%%%%%%%%
%%%%%%%%%%%%%%%% Abstract %%%%%%%%%%%%%%%%%%%%%%%%%%%%%%%%
%%%%%%%%%%%%%%%%%%%%%%%%%%%%%%%%%%%%%%%%%%%%%%%%%%%%%%
%\begin{abstract}
%\end{abstract}

%%%%%%%%%%%%%%%%%%%%%%%%%%%%%%%%%%%%%%%%%%%%%%%%%%%%
%%%%%%%%%%%%%%%% Title    %%%%%%%%%%%%%%%%%%%%%%%%%%%%%%%%
%%%%%%%%%%%%%%%%%%%%%%%%%%%%%%%%%%%%%%%%%%%%%%%%%%%%
\maketitle
\thispagestyle{empty} 

%%%%%%%%%%%%%%%%%%%%%%%%%%%%%%%%%%%%%%%%%%%%%%%%%%%%
%%%%%%%%%%% Introduction %%%%%%%%%%%%%%%%%%%%%%%%%%%%%%%%%%
%%%%%%%%%%%%%%%%%%%%%%%%%%%%%%%%%%%%%%%%%%%%%%%%%%%%

\section{Introduction}
This paper concerns commutative Noetherian rings. 

Let  $R$  be a Cohen-Macaulay local ring of Krull dimension $d$  with canonical module $\w$  and let $M, N$  be maximal Cohen-Macaulay modules over $R$. 
Assume that $R$ has only an isolated singularity. 
Then we have an isomorphism
\begin{equation}\label{ar}
\pHom _R (M, N) ^{\vee} \cong \Ext^1_R (N, \tau M), 
\end{equation}
where $\tau M = \Hom _R( \Omega ^d(\tr M), \w)$. 
For the definition of $\tr M$, see the paragraph preceding Corollary \ref{ARcor}.
This isomorphism (\ref{ar}) is known as Auslander-Reiten duality, or simply AR duality. 
For the proof of (\ref{ar}) the reader should refer to \cite[Proposition 1.1]{Aus3}. 

The AR duality plays a crucial role in the theory of maximal Cohen-Macaulay modules.
In fact, one can derive from (\ref{ar}) the existence of Auslander-Reiten sequence in the category of maximal Cohen-Macaulay modules over an isolated singularity. 
See \cite[Theorem 3.2]{Ybook}. 
Further assuming that $R$ is Gorenstein, it assures us that the stable category of the category of maximal Cohen-Macaulay modules has $(d-1)$-Calabi-Yau property.
See \cite[Theorem 8.3]{IY}. 
 
Recently, Iyama and Wemyss \cite{IW} have generalized the AR duality to rings whose singular locus has at most one dimension. 
See \cite[Theorem 3.1]{IW}. 

The purpose of this paper is to propose a general principle behind the AR duality, by which we mean a general theorem for modules or chain complexes of modules in a kind of general form that encompasses the classical AR duality and its generalization. 
In the end we have reached the following conclusion to this aim of building the principle, which we dare call the AR principle :  

\vspace{6pt} 
\noindent 
{\bf Theorem[AR Principle]}(Theorem \ref{AR}).  
{\it 
Let $R$ be a commutative Noetherian ring and let $W$ be a specialization-closed subset of $\Spec (R)$. 
Given a bounded complex $I$ of injective $R$-modules with $I^i=0$ for all $i>n$ and a complex $X$ such that the support of $H^i(X)$ is contained in $W$ for all $i<0$, 
the natural map $\Gamma_W I\rightarrow I $ induces isomorphisms 

$$
\Ext^i _R(X, \Gamma_W I ) \overset{\cong}{\longrightarrow} \Ext^i _R (X, I)\,\,\,\,\,\,\,\,\,\text{for } i>n. 
$$
}
%\vspace{6pt} 

This result is proved in \S 2.
We emphasize that this theorem is similar to a version of the local duality theorem; see Remark \ref{LD}.

In \S3 we apply the AR principle to deduce the formula  (\ref{ar}).  
See Corollary \ref{ARcor}. 
In fact, we consider the case where $(R, \m)$  is a local ring,  $W = \{ \m \}$, and  $I$  is a dualizing complex of  $R$.
Then it naturally induces Theorem \ref{AR1} below, which is also regarded as a generalization of the original AR duality (\ref{ar}). 

In almost the same circumstances above but  $W = \{ \p\in\Spec(R)\,|\,\dim  R/\p \leqq 1 \}$, we deduce from AR principle the generalization of AR duality due to Iyama and Wemyss. 
This will be explained in detail in \S 4. 
See Theorem \ref{AR3} and Corollary \ref{AR4} in particular. 

In \S 5 we discuss the Auslander-Reiten conjecture for modules over Gorenstein rings.
The Auslander-Reiten conjecture (abbreviated to ARC) can be stated in its most general form as follows:

\vspace{6pt}
\noindent
{\bf(ARC)}\; 
Let $R$ be a commutative Noetherian ring and $M$ a finitely generated $R$-module. 
If $\Ext _{R}^{i}({M}, {M{\oplus}R})=0$ for all $i>0$, then $M$ is projective.
\vspace{6pt}

For the history of ARC and some recent results on ARC, see \cite{A},\cite{Aus1},\cite{Aus2},\cite{HSV} and \cite{SE}.
By virtue of our AR principle we can prove a more stronger result than (ARC) in some cases. 
Actually Corollary \ref{end} below gives the following : 

\vspace{6pt}
\noindent 
{\bf Theorem} (See Corollary \ref{end}).
Let $R$ be a Gorenstein local ring of dimension $d $ that is larger than $2$. 
Assume that $M$  is a maximal Cohen-Macaulay $R$-module whose non-free locus has dimension $\leqq 1$, i.e. 
$M _{\p}$  is  $R_{\p}$-free for any $\p \in \Spec (R)$ with $\dim{R/\p} > 1$. 
Furthermore we assume that
$$
\Ext _R^{d-1}(M, M) = 0=\Ext ^{d-2}_{R}(M, M).
$$
Then M is a free $R$-module.
\vspace{12pt}
\par
\noindent
{\bf Acknowledgement.}
The authors are very grateful to the anonymous referee for his valuable comments on the first draft.

%%%%%%%%%%%%%%%%%%%%%%%%%%%%%%%%%%%%%%%%%%%%%%%%%%%%
%%%%%%%%%%%%%%%%%%%%%%%%%%%%%%%%%%%%%%%%%%%%%%%%%%%%
%%%%%%%%%%%%%%%%%%%%%%%%%%%%%%%%%%%%%%%%%%%%%%%%%%%%
\section{AR principle in derived category}

Let  $R$  be a commutative Noetherian ring. 
We denote by  $\D = D( R)$  the full derived category of $R$. 
Note that the objects of  $\D$  are chain complexes over  $R$, which we denote by the cohomological notation such as 
$$
X = ( \cdots \to X ^{n-1} \to X^{n} \to X^{n+1} \to \cdots).  
$$
It should be noted that  $\D$  has a structure of triangulated category with shift functor, denoted by $X\mapsto X[1]$.

Recall that a full subcategory  $\L$ of  $\D$  is called a {\it localizing}  subcategory if it is a triangulated subcategory and it is closed under direct sums and direct summands. 
By Bousfield theorem \cite[Theorem 2.6]{N}, the natural inclusion  $i : \L \mono \D$ has a right adjoint functor $\gamma: \D \to \L$, i.e. 
$$
\Hom _{\D} ( iX, Y) \cong \Hom _{\L} (X, \gamma Y),
$$  
for all $X \in \L$  and  $Y \in \D$.

For a chain complex  $X$  we define the small support  $\supp (X)$ to be the set of prime ideals  $\p$  such that  $X \Lotimes _R \ \kappa (\p) \not= 0$, where  $\kappa (\p) = R_{\p}/\p R_{\p}$. 
(Cf. \cite{F}.) 
The small support of a full subcategory $\L$  of  $\D$  is the union of all the small supports of objects of  $\L$, so that  
$$
\supp (\L) = \{ \p \in \Spec (R) \ |\ X \Lotimes _R \ \kappa (\p) \not= 0 \ \text{for some} \ X \in \L\}. 
$$

For any subset  $W \subseteq \Spec (R)$, the full subcategory 
$$
\L_{W} = \{ X \in \D \ | \ \supp (X) \subseteq W\} 
$$
is a localizing subcategory of  $\D$.
The correspondences  $\L \mapsto \supp (\L)$ and  $W \mapsto \L_W$ yield a bijection between the set of localizing subcategories of $\D$ and the power set of $\Spec (R)$. 
This was proved by A.Neeman \cite[Theorem 2.8]{N}. 

We say that a localizing subcategory $\L _W$  is smashing if  $W$ is a specialization-closed subset of  $\Spec (R)$, and in this case, the functor  $\gamma : \D \to \L_W$  is nothing but the local cohomology functor   $\RGamma _W$. 
See \cite[Theorem 3.3]{N}.

\begin{remark}\label{rem1}
The big support  $\Supp (X)$  of a chain complex  $X \in \D$ is the set of prime ideals $\p$ of $R$  with the property  $X \Lotimes _R R_{\p} \not= 0$, or equivalently $H(X)_{\p} \not= 0$. 
In general it holds   
$$
\supp (X) \subseteq \Supp (X), 
$$
for all $X \in \D$. 
If $X$ belongs  to  $\D^{-}_{fg}( R)$, then we have  $\supp (X) = \Supp (X)$ which is a closed subset of  $\Spec (R)$.
Given a specialization-closed subset $W$ of $\Spec(R)$, a complex $X$ is an object of $\L_W$ if and only if the big support of X is contained in $W$ if and only if $\Supp H^i(X)\subseteq W$ for all $i\in\mathbb{Z}$.
See \cite{BIK}. 
\end{remark}

\begin{definition} 
Let  $X \in \D$ be a chain complex;  
$$
\cdots \longrightarrow X^n \overset{d^n}\longrightarrow X^{n+1} \overset{d^{n+1}}\longrightarrow X^{n+2} \longrightarrow \cdots. $$ 
For an integer  $n$  we define the truncations $\sigma _{> n}X$  and  $\sigma _{\leqq n}X$  as follows: 
$$
\begin{aligned}
&\sigma _{>n}X = \left( \cdots \to 0 \to \Im \ d^{n} \to X^{n+1} \overset{d^{n+1}}\longrightarrow X^{n+2} \to \cdots \right) \\ 
&\sigma _{\leqq n}X = \left(\cdots \to X^{n-2} \overset{d^{n-2}}\longrightarrow X^{n-1} \to \Ker \ d^{n} \to 0 \to \cdots \right) \\ 
\end{aligned}
$$
See \cite[Chapter 1; \S 7]{Hartshorne} for more detail. 
Note that there is an exact triangle in $\D$;
$$\sigma_{\leqq n}X \longrightarrow X \longrightarrow \sigma_{> n} X \longrightarrow \sigma_{\leqq n}X[1].$$
\end{definition}

Now the following theorem is a main theorem of this paper, which we call AR principle. 
Actually this is an equivalent version of the theorem in the Introduction.

%%%%%%%%%%%%%%%%%%%%%%%%%%%%%%%
%%%%%%%%%%%%%%%%%%%%%%%%%%%%%%%
\begin{theorem}\label{AR}
Let  $X, I$ be chain complexes in $\D$ and let $\L$  be a smashing subcategory of $\D$ with $\gamma : \D \to \L$  a right adjoint functor to the natural embedding $\iota : \L \mono \D$.
We assume the following conditions hold for some integer $n$; 
\begin{enumerate}
\item 
$I$  is a bounded injective complex, and right bounded at most in degree $n$. 
\item 
$\sigma_{\leqq -1 } X \in \L$.  
\end{enumerate}
Then the natural map $\gamma I \to I$  induces an isomorphism  
$$
\sigma _{> n} \RHom _R (X, \gamma I) \cong  \sigma _{> n} \RHom _R (X, I).  
$$
\end{theorem}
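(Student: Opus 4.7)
The plan is to use the truncation triangle
$$
\sigma_{\leq -1} X \to X \to \sigma_{>-1} X \to \sigma_{\leq -1} X[1]
$$
and study the map $\RHom_R(-, \gamma I) \to \RHom_R(-, I)$ induced by $\gamma I \to I$ separately on each term, then patch the pieces together via the 5-lemma.

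First I handle the piece from $\sigma_{\leq -1} X$. Let $C = \mathrm{cone}(\gamma I \to I)$. Applying the triangulated functor $\gamma$ to $\gamma I \to I \to C \to \gamma I[1]$ and using that $\gamma \iota \cong \mathrm{id}_{\L}$ (so the counit $\gamma I \to I$ is sent to the identity on $\gamma I$), I find $\gamma C = 0$. By adjunction, $\Hom_{\D}(A, C[k]) \cong \Hom_{\L}(A, \gamma C[k]) = 0$ for every $A \in \L$ and $k \in \Z$, and in particular $\Ext^k_R(\sigma_{\leq -1} X, C) = 0$ for all $k$ since $\sigma_{\leq -1} X \in \L$. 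Consequently, the map $\RHom_R(\sigma_{\leq -1} X, \gamma I) \to \RHom_R(\sigma_{\leq -1} X, I)$ is an isomorphism in $\D$.

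Next I handle the piece from $\sigma_{>-1} X$, where I claim $\Ext^i_R(\sigma_{>-1} X, J) = 0$ for $i > n$ and for both $J = I$ and $J = \gamma I$. Since $\L$ is smashing, $\L = \L_W$ for some specialization-closed subset $W$ and $\gamma = \RGamma_W$. Noetherianness of $R$ ensures $\Gamma_W$ preserves injectivity of modules, so $\Gamma_W I$ represents $\gamma I$, is again a bounded complex of injectives, and vanishes in degrees $> n$. Thus both $I$ and $\gamma I$ are K-injective complexes concentrated in degrees $\leq n$, and $\RHom_R(-, J)$ is computed by the honest Hom-complex. Replacing $\sigma_{>-1} X$ by the quasi-isomorphic canonical truncation $\tau_{\geq 0} X$ (with $(\tau_{\geq 0} X)^j = 0$ for $j < 0$), the degree-$i$ term
$$
\Hom^\bullet_R(\tau_{\geq 0} X, J)^i = \prod_{\substack{j \geq 0 \\ j + i \leq n}} \Hom_R\bigl((\tau_{\geq 0} X)^j,\, J^{j+i}\bigr)
$$
is an empty product, hence zero, for every $i > n$. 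This yields the claimed $\Ext$-vanishing.

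Finally, applying $\RHom_R(-, \gamma I)$ and $\RHom_R(-, I)$ to the truncation triangle for $X$ and joining via $\gamma I \to I$ produces a morphism of distinguished triangles whose induced ladder of long exact cohomology sequences is five-term in each degree. For $i > n$, the outer terms involving $\sigma_{>-1} X$ vanish on both sides by the second step, while the inner terms involving $\sigma_{\leq -1} X$ are isomorphic on both sides by the first step; the 5-lemma then forces $\Ext^i_R(X, \gamma I) \to \Ext^i_R(X, I)$ to be an isomorphism for all $i > n$, which is the desired $\sigma_{>n}$-isomorphism. I expect the main obstacle to lie in the second step: the cohomological amplitude of $\gamma I$ must be controlled to match that of $I$, which uses both the concrete identification $\gamma = \RGamma_W$ available for smashing $\L$ and the Noetherian hypothesis on $R$ (so that $\Gamma_W$ of an injective is injective).
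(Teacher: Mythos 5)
Your proof is correct and uses essentially the same approach as the paper's: both reduce to the truncation $\sigma_{\leq -1}X$ using that $I$ and $\gamma I = \Gamma_W I$ are bounded complexes of injectives vanishing above degree $n$, and then invoke the adjunction together with $\sigma_{\leq -1}X \in \L$. Your packaging via the cone $C$ of $\gamma I \to I$ with $\gamma C = 0$, the $\Ext^{>n}$-vanishing for $\sigma_{>-1}X$, and the 5-lemma is a clean reorganization of the same ideas that the paper carries out by a direct chain-map computation of $\Hom_{\D}(X, I[i])$.
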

%%%%%%%%%%%%%%%%%%%%%%%%%%%%%%%
%%%%%%%%%%%%%%%%%%%%%%%%%%%%%%%

\begin{proof}
Since $\gamma$ is right adjoint to $\iota$, we have a counit morphism  $\iota\gamma I \to I$ in $\D$,  which induces the morphism 
$$
\RHom _R(X, \gamma I)  \to  \RHom _R (X, I). 
$$
To prove the theorem it is enough to shown that this morphism induces isomorphisms  
$$
H^i (\RHom _R(X, \gamma I))  \cong H^i( \RHom _R (X, I))
$$
for  $i >n$. 

Note that  $H^i( \RHom _R (X, I)) \cong  \Hom _{\D} (X, I[i])$, where  $[i]$  denotes the $i$ iterations of the shift functor $[1]$  in the triangulated category $\D$. 
Therefore, noting that  $I$  is a bounded injective complex, we see that an element $f$  of $H^i( \RHom _R (X, I))$  is a homotopy equivalence class of a chain map  $X \to I[i]$: 
$$
\begin{CD}
\cdots @>>> X^{-i-1} @>>> X^{-i} @>>> \cdots @>>> X^{-i+n} @>>> X^{-i+n+1} @>>> \cdots \\
@. @V{f^{-i-1}}VV @V{f^{-i}}VV @. @V{f^{-i+n}}VV @V{0}VV \\
\cdots @>>> I^{-1} @>>> I^{0} @>>> \cdots @>>> I^n @>>> 0
\end{CD}
$$
Since  $-i + n <0$, we have 
\begin{equation}\label{1}
\Hom _{\D} (X, I[i]) \cong \Hom _{\D} (\sigma _{\leqq -1} X, I[i]). 
\end{equation}
Now since  $\L$  is smashing, it forces that  $\gamma$  is of the form $\RGamma _W$  for a specialization-closed subset $W$ of $\Spec (R)$.  
Thus  $\gamma I$ is a subcomplex of $I$ and each term of  $\gamma I$ is also an injective module. 
As a consequence $\gamma I$ satisfies the same condition as  $I$. 
Therefore similar argument as above shows the isomorphism 
\begin{equation}\label{2}
\Hom _{\D} (X, \gamma I[i]) \cong \Hom _{\D} (\sigma _{\leqq -1} X, \gamma I[i]). 
\end{equation}
The right-hand sides in the equations (\ref{1}),(\ref{2}) are naturally isomorphic each other, since  $\sigma _{\leqq -1}X \in \L$. 
This completes the proof. 
\end{proof}

%%%%%%%%%%%%%%%%%%%%%%%%%%%%%%%
%%%%%%%%%%%%%%%%%%%%%%%%%%%%%%%
Theorem [AR Principle] stated in the introduction is a direct restatement of Theorem 2.3.
In fact, if $W$ is a specialization-closed subset of $\Spec(R)$ and if $\L=\L_W$, then it follows that $\gamma=\RGamma_W$, and the condition $(2)$ in Theorem 2.3 is equivalent to that $\Supp H^i(X)\subseteq W$ for $i<0$, by Remark 2.1.

\begin{remark}\label{LD}
We adopted such description of the AR principle as in Theorem \ref{AR}, because of its  similarity to the generalized version of local duality, that can be stated as follows: 

\vspace{6pt}{\it  
Let  $X, I$ be complexes in $\D$ and let $\L$  be a smashing subcategory of $\D$ with $\gamma : \D \to \L$  being as above. 
We assume the following conditions hold: 
\begin{enumerate}
\item 
$I$  is a bounded injective complex. 
\item 
$X \in \D^{-}_{fg}( R)$.  
\end{enumerate}
Then we have an isomorphism in  $\D$; 
$$
\RHom _R (X, \gamma I) \cong  \gamma \RHom _R (X, I).  
$$
}
\vspace{4pt}
\noindent
This version of local duality theorem was proposed by Hartshorne\cite[Chapter V, Theorem 6.2]{Hartshorne} and later generalized by Foxby\cite[Proposition 6.1]{F}. 
\end{remark}

\begin{question}
In Theorem \ref{AR} and Remark \ref{LD}, do the conclusions hold true if  $\L$  is not necessarily smashing but just localizing? 
\end{question}

%%%%%%%%%%%%%%%%%%%%%%%%%%%%%%%
%%%%%%%%%%%%%%%%%%%%%%%%%%%%%%%
%%%%%%%%%%%%%%%%%%%%%%%%%%%%%%%
\section{The case of isolated singularity}

Now in this section we assume that  $(R, \m)$  is a local ring of dimension $d$. 
We apply the AR principle to the following setting: 
\begin{itemize}
\item[$-$]
$W_0 = \{\m\}$, 
\item[$-$] 
$\L = \L_{W_0} = \{ X \in \D \ | \ \supp X \subseteq \{ \m \}\}$, and  
\item[$-$]
$I$  is a dualizing complex of  $R$.
\end{itemize}
We normalize $I$ so that it is of the form; 
$$
\begin{CD}
0 @>>> I ^0 @>>> I^1 @>>> \cdots @>>> I^d @>>> 0,  
\end{CD}
$$
where  $I^i = \bigoplus _{\dim R/\p = d-i}E_R(R/\p)$ for each $i$. 
(Cf. \cite{Hartshorne} or \cite{Sh}.)
In this case, since $\gamma = \RGamma _{\m}$, we have  $\gamma I = E [-d]$  where  $E = E_R(R/\m)$  is the injective hull of  $R/\m$.

For a chain complex  $X \in \D$ we denote 
$$
X^{\vee} = \RHom _R (X, E), \quad  X^{\dagger} = \RHom _R (X, I),
$$ 
which are respectively called the Matlis dual and the canonical (or Grothendieck) dual of  $X$. 
To all such situations, Theorem \ref{AR} can be applied directly and we get the following theorem. 

%%%%%%%%%%%%%%%%%%%%%%%%%%%%%%%
%%%%%%%%%%%%%%%%%%%%%%%%%%%%%%%
\begin{theorem}\label{AR1}
Let  $(R, \m)$  is a local ring of dimension $d$ as above. 
We assume  $X \in \D$  satisfies that  $\supp (\sigma _{\leqq -1}X) \subseteq \{\m\}$. Then we have an isomorphism 
$$
\sigma_{> 0} (X^{\vee})  \cong \sigma_{> d} (X^{\dagger}) [d].
$$
\end{theorem}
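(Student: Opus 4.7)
The plan is to deduce Theorem \ref{AR1} as a direct specialization of the AR principle (Theorem \ref{AR}), once we identify $\gamma I$ concretely and track how truncations interact with the shift functor.

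First I would verify that the setup of Theorem \ref{AR} applies. The subset $W_0 = \{\m\}$ is specialization-closed (it is closed in $\Spec(R)$), hence $\L = \L_{W_0}$ is smashing and the right adjoint is $\gamma = \RGamma_\m$. The normalized dualizing complex $I$ satisfies $I^i = 0$ for $i > d$, so condition $(1)$ of Theorem \ref{AR} holds with $n = d$. Condition $(2)$ is exactly the hypothesis that $\sigma_{\leqq -1}X \in \L$, since $\supp(\sigma_{\leqq -1}X) \subseteq \{\m\}$.

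Next I would identify $\gamma I$. Since $I^i = \bigoplus_{\dim R/\p = d-i} E_R(R/\p)$ and $\RGamma_\m$ kills every $E_R(R/\p)$ with $\p \ne \m$ while preserving $E = E_R(R/\m)$, one sees that $\gamma I = \RGamma_\m I$ is the subcomplex concentrated in degree $d$ with value $E$; that is, $\gamma I \cong E[-d]$. (This is the classical computation underlying local duality and can be cited from \cite{Hartshorne}.) Consequently
$$
\RHom_R(X, \gamma I) \;\cong\; \RHom_R(X, E[-d]) \;\cong\; X^\vee[-d].
$$

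Now I would apply Theorem \ref{AR} with $n = d$ to obtain the isomorphism
$$
\sigma_{>d}\,\RHom_R(X, \gamma I) \;\cong\; \sigma_{>d}\,\RHom_R(X, I) \;=\; \sigma_{>d}(X^\dagger).
$$
Substituting the identification above and using the elementary fact that for any complex $Y$ the truncation commutes with shift as $\sigma_{>d}(Y[-d]) = (\sigma_{>0}Y)[-d]$, the left-hand side becomes $(\sigma_{>0}X^\vee)[-d]$. Shifting both sides by $[d]$ yields
$$
\sigma_{>0}(X^\vee) \;\cong\; \sigma_{>d}(X^\dagger)[d],
$$
which is the desired isomorphism.

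There is essentially no obstacle; the theorem is a clean specialization. The only points requiring slight care are the explicit computation $\RGamma_\m I \cong E[-d]$ for the normalized dualizing complex, and the bookkeeping between the truncation index $d$ on the canonical dual side and the index $0$ on the Matlis dual side after applying the $[-d]$ shift.
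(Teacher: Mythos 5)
Your proof is correct and takes essentially the same approach as the paper: the paper itself simply states that "Theorem \ref{AR} can be applied directly" after setting up $W_0 = \{\m\}$, $\L = \L_{W_0}$, $\gamma = \RGamma_\m$, and $\gamma I = E[-d]$, and you have merely spelled out the bookkeeping (including the correct identity $\sigma_{>d}(Y[-d]) = (\sigma_{>0}Y)[-d]$) that the paper leaves implicit.
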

%%%%%%%%%%%%%%%%%%%%%%%%%%%%%%%
%%%%%%%%%%%%%%%%%%%%%%%%%%%%%%%
\indent
By Remark 2.1, the theorem can be stated in the following way: 

\vspace{6pt}
\indent
{\it If $\Supp H^i(X)\subseteq \{\m\}$ for all $i\leqq-1$, 
then $\Ext^j_R(X,E)\cong\Ext^{j+d}_R(X,I)$ for all $j>0$.}

\vspace{12pt}

Now assume that  $(R, \m)$ is a Cohen-Macaulay local ring which possesses canonical module  $\omega$. 
Note in this case that the dualizing complex $I$ is a minimal injective resolution of $\omega$. 
We denote the category of maximal Cohen-Macaulay modules over $R$  by $\CM (R)$. For a finitely generated $R$-module $M$ we write as  $\NF (M)$ the non-free locus of $M$, i.e. 
$$
\NF (M) = \{ \p \in \Spec (R) \ | \ M_{\p}\ \ \text{is not $R_{\p}$-free} \}.
$$
It is known and easily proved that $\NF (M)$ is a closed subset of  $\Spec (R)$ whenever  $M$  is finitely generated, since $\NF (M) = \Supp \, \Ext^1_R (M,\Omega M)$. 

We need to recall the definition of  the (Auslander) transpose for the corollary below. Let 
$
F_1 \overset{\partial}\to F_0 \to M \to 0
$
be a minimal free presentation of a finitely generated $R$-module $M$.
Then the transpose $\tr{M}$ is defined as $\mathrm{Coker}(\Hom(\partial,{R}))$.

Theorem \ref{AR1} implies the following result that generalizes a little the Auslander-Reiten duality mentioned in the beginning of the paper. 

%%%%%%%%%%%%%%%%%%%%%%%%%%%%%%%
%%%%%%%%%%%%%%%%%%%%%%%%%%%%%%%
\begin{corollary}\label{ARcor}
Let  $R$  be a Cohen-Macaulay local ring with canonical module and let $M, N \in \CM (R)$. 
Assume that  $\NF (M) \cap \NF (N) \subseteq \{ \m \}$. 
Then we have an isomorphism
$$
\pHom _R (M, N) ^{\vee} \cong \Ext^1_R (N, \tau M), 
$$
where 
$\tau M = [\Omega ^d(\tr M)]^{\dagger}$
\end{corollary}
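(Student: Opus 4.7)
The plan is to apply Theorem \ref{AR1} to the complex $X=\tr M\Lotimes_R N$. For the Matlis-dual side, injectivity of $E$ gives the identification $\Ext_R^j(X,E)\cong H^{-j}(X)^{\vee}$ for every $j$; in particular
$$\Ext_R^1(X,E)\cong \Tor_1^R(\tr M, N)^{\vee}.$$
Applying the classical Auslander formula $\pHom_R(M,N)\cong \Tor_1^R(\tr M, N)$ rewrites this as $\pHom_R(M,N)^{\vee}$. I would re-derive the formula by tensoring the four-term exact sequence $0\to \Hom_R(M,R)\to P_0^{*}\to P_1^{*}\to \tr M\to 0$ (obtained by dualizing a minimal presentation $P_1\to P_0\to M\to 0$) with $N$, and identifying the image of $\Hom_R(M,R)\otimes_R N\to\Hom_R(M,N)$ with the submodule of maps factoring through a free module.

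To invoke Theorem \ref{AR1} I verify $\Supp H^i(X)\subseteq\{\m\}$ for $i\leq -1$. For any $\p\neq\m$, the hypothesis $\NF(M)\cap\NF(N)\subseteq\{\m\}$ forces either $M_{\p}$ or $N_{\p}$ to be $R_{\p}$-free. If $N_{\p}$ is free, then $\Tor_i^R(\tr M,N)_{\p}=0$ for $i\ge 1$ is automatic; if $M_{\p}$ is free, then $(\tr M)_{\p}$ differs from $\tr_{R_{\p}}(M_{\p})=0$ only by free summands and is therefore $R_{\p}$-projective, so the same vanishing holds. Theorem \ref{AR1} with $j=1$ then yields
$$\pHom_R(M,N)^{\vee}\cong \Ext_R^1(X,E)\cong \Ext_R^{d+1}(X,\omega).$$

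The remaining step, which I expect to be the main obstacle, is the identification $\Ext_R^{d+1}(X,\omega)\cong\Ext_R^1(N,\tau M)$. Set $K=\Omega^d\tr M$; since $R$ is Cohen-Macaulay of dimension $d$, the module $K$ is maximal Cohen-Macaulay. From a partial free resolution $0\to K\to Q_{d-1}\to\cdots\to Q_0\to\tr M\to 0$, the complex $[K\to Q_{d-1}\to\cdots\to Q_0]$ placed in cohomological degrees $-d,\ldots,0$ is quasi-isomorphic to $\tr M$ in $\D$ and fits in a short exact sequence of complexes with subcomplex $K[d]$ and quotient $D=[Q_{d-1}\to\cdots\to Q_0]$. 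The resulting triangle $K[d]\to\tr M\to D\to K[d+1]$, upon applying $\RHom_R(-,\omega)$ and using $\RHom_R(K,\omega)=\tau M$, produces
$$W^{\bullet}\longrightarrow\RHom_R(\tr M,\omega)\longrightarrow\tau M[-d]\longrightarrow W^{\bullet}[1],$$
where $W^{\bullet}=\Hom_R(D,\omega)$ is a bounded complex of finite direct sums of $\omega$ concentrated in cohomological degrees $0,\ldots,d-1$. Applying $\RHom_R(N,-)$, and using that $N\in\CM(R)$ makes $\RHom_R(N,W^{\bullet})=\Hom_R(N,W^{\bullet})$ concentrated in degrees $0,\ldots,d-1$ as well; consequently the long exact sequence of the triangle collapses at $j=d+1$ (both flanking terms $H^{d+1}$ and $H^{d+2}$ of $\RHom_R(N,W^{\bullet})$ vanish) into an isomorphism $\Ext_R^{d+1}(X,\omega)\cong\Ext_R^1(N,\tau M)$, completing the proof. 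The delicate point is assembling this triangle and tracking the degree shifts and truncations.
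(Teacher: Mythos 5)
Your proof is correct and follows the same overall strategy as the paper: apply Theorem \ref{AR1} to $X = \tr M\Lotimes_R N$, verify the support hypothesis from $\NF(M)\cap\NF(N)\subseteq\{\m\}$, and identify $H^1(X^{\vee})\cong\pHom_R(M,N)^{\vee}$ via the Auslander formula. The divergence is in the final identification $H^{d+1}(X^{\dagger})\cong\Ext^1_R(N,\tau M)$. The paper proceeds entirely at the level of $\Ext$ of modules: it uses derived adjunction to rewrite $X^{\dagger}\cong\RHom_R(\tr M, N^{\dagger})$, notes $N^{\dagger}$ is a module since $N$ is MCM, applies the classical dimension shift $\Ext^{d+1}_R(\tr M,-)\cong\Ext^1_R(\Omega^d\tr M,-)$, and then swaps $\Ext^1_R(\Omega^d\tr M, N^{\dagger})\cong\Ext^1_R(N,[\Omega^d\tr M]^{\dagger})$ using tensor-hom adjunction for the two MCM modules. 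You instead construct an explicit triangle $K[d]\to\tr M\to D$ from the partial free resolution, dualize into $\omega$, apply $\RHom_R(N,-)$, and observe that the long exact sequence collapses in the relevant degree because $\Hom_R(N,W^{\bullet})$ is concentrated in degrees $0,\ldots,d-1$ (using $\Ext^{>0}_R(N,\omega)=0$ as $N$ is MCM). These are two presentations of the same computation: the dimension-shift isomorphism in the paper is precisely what the long exact sequence of your triangle encodes, and the vanishing $\Ext^{>0}_R(N,\omega)=0$ plays the same role in both. Your version is more explicit and traces the degree bookkeeping carefully, which makes it self-contained but longer; the paper's is terser, relying on standard facts about $\Ext$ of MCM modules. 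Both are valid.
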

%%%%%%%%%%%%%%%%%%%%%%%%%%%%%%%
%%%%%%%%%%%%%%%%%%%%%%%%%%%%%%%

\begin{proof}
Setting $X = \tr M \Lotimes _R N$, we see that the condition  $\NF (M) \cap \NF (N) \subseteq \{\m\}$  forces that  $\supp \ \sigma _{<0}X \subseteq \Supp \ \sigma _{<0}X \subseteq \{\m\}$.
Hence we can apply Theorem \ref{AR1} to $X$ and get an isomorphism
$$
H^{d+1} (X^{\dagger}) \cong H^{1}(X^{\vee}) \cong H^{-1}(X)^{\vee}.
$$
It is known that $H^{-1}(X) = \Tor _1^R (\tr M, N) \cong \pHom _R (M, N)$. 
See \cite[Lemma 3.9]{Ybook}.
On the other hand, 
since $X^{\dagger} = \RHom _R (\tr M \Lotimes _R N, I) \cong 
\RHom _R (N, [\tr M]^{\dagger}) \cong \RHom _R (\tr M, N^{\dagger})$,
we have 
$$
\begin{aligned}
H^{d+1}(X^{\dagger}) 
\cong \Ext ^{d+1}_R (\tr M, N^{\dagger}) 
\cong \Ext ^1_R (\Omega ^{d}\tr M, N^{\dagger})  & \cong \Ext ^1_R (N, [\Omega ^{d}\tr M]^{\dagger}). 
\end{aligned}
$$
\end{proof}

%%%%%%%%%%%%%%%%%%%%%%%%%%%%%%%%
%付け加え%%%%%%%%%%%%%%%%%%%%%%%%%
\begin{remark}
We remark form Corollary 3.2 that 
AR duality still holds even if $M$ is a finitely generated $R$-module but $N$ is not necessarily finitely generated. 
Suppose that $\NF(M) \cap \Supp(N) \subseteq \{\m\}$ and $H^i(N^\dagger)=0$ for $i>0$. In a similar way to Corollary 3.2, we can show that
$$\pHom (M,N)^{\vee} \cong \Ext^{1}_{R} (N,\tau M).$$
Note that if $R$ is a Cohen-Macaulay complete local ring and $N$ is a big Cohen-Macaulay module, it follows that $H^i(N^\dagger)=0$ for $i>0$. See \cite[Proposition 2.6]{PG}.
For example, $R=k[[x,y]]$ is a formal power series ring where $k$ is a field and $N=R\oplus E_R(R/(y))$. Assume that $M$ is a finitely generated $R$-module which is a locally free on the punctured spectrum. Since $N$ is a big Cohen-Macaulay module from \cite[Remark 3.3]{PG},
we obtain that
$$\pHom (M,E_R(R/(y)))^{\vee} \cong \Ext^1_{R} (E_R(R/(y)),\tau M).$$
\end{remark}
%%%%%%%%%%%%%%%%%%%%%%%%%%%%%%%
%%%%%%%%%%%%%%%%%%%%%%%%%%%%%%%
%%%%%%%%%%%%%%%%%%%%%%%%%%%%%%%
\section{The case of codimension one singular locus}

In this section  $(R, \m)$  always denotes a local ring of dimension $d$ as before. 
We consider the following conditions, in which we apply the AR principle \ref{AR}:  
\begin{itemize}
\item[$-$] 
$W _{1}= \{ \p \in \Spec(R)\ |\  \dim R/\p \leqq1\}$, 
\item[$-$]
$\L = \L _{W_1} = \{ X \in \D \ | \ \supp X \subseteq W _{1}\}$, and  
\item[$-$]
$R$  has a (normalized) dualizing complex $I$.
\end{itemize}

In this case,  since  $\gamma = \RGamma _{W_{1}}$,  it follows that $\gamma I$ is a two-term complex; 
\begin{equation}\label{twotermcpx}
\begin{CD}
0 @>>> I^{d-1} @>{\partial}>> I^{d} @>>> 0,  
\end{CD}
\end{equation}     
where 
$$
I ^{{d-1}}= \bigoplus  _{\dim R/\p =1} E_{R}(R/\p) =: J ,  \quad  I^{d} = E_{R}(R/\m) =: E. 
$$
We thus have a triangle in $\D$; 
$$
\begin{CD}
E[-d] @>>> \gamma I @>>> J[-d+1] @>{\partial [-d+1]}>> E[-d+1].  \\
\end{CD}
$$
Now let  $X \in \D$  and assume that  $\sigma _{\leqq -1}X \in \L$. 
It follows that there is a triangle in $\D$; 
$$
\begin{CD}
X^{\vee}[-d] @>>> \RHom _{R}(X, \gamma I) @>>>\RHom _{R}(X,  J)[-d+1] @>{\RHom (X,\partial)}>> X^{\vee}[-d+1].  \\
\end{CD}
$$
On the other hand, Theorem \ref{AR} says that there are isomorphisms 
$$
H ^{d+i} (X^{\dagger}) \cong H ^{d+i} (\RHom _{R}(X, \gamma I)), 
$$
for $i > 0$. 
Combining this isomorphism with the triangle above, we have the following proposition. 

%%%%%%%%%%%%%%%%%%%%%%%%%%%%%%%%
%%%%%%%%%%%%%%%%%%%%%%%%%%%%%%%%
\begin{proposition}\label{AR2}
Assume that  $X \in \D$ satisfies that  $\supp (\sigma_{\leqq -1}X) \subseteq W_{1}$. 
Then there is a long exact sequence of $R$-modules: 
$$
\begin{CD}
@. \Hom _{R}(H^{-1}(X), J)  @>{\Hom_R(H^{-1}(X), \partial)}>> H^{-1}(X)^{\vee}  @>>> H^{d+1}(X^{\dagger})  \\ 
@>>> \Hom _{R}(H^{-2}(X), J)  @>{\Hom_R(H^{-2}(X), \partial)}>> H^{-2}(X)^{\vee}  @>>> H^{d+2}(X^{\dagger})  \\
@>>> \cdots \\ 
@>>> \Hom _{R}(H^{-i}(X), J)  @>{\Hom_R(H^{-i}(X), \partial)}>> H^{-i}(X)^{\vee}  @>>> H^{d+i}(X^{\dagger})    \\
@>>> \cdots. \\
\end{CD}
$$   
\end{proposition}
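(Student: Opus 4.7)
\noindent
The plan is to apply $\RHom_R(X,-)$ to the distinguished triangle $E[-d]\to\gamma I\to J[-d+1]\overset{\partial[-d+1]}{\longrightarrow} E[-d+1]$ displayed just before the statement, and to extract the claimed long exact sequence by taking cohomology and identifying the terms using Theorem \ref{AR} for the middle term and the injectivity of $E$ and $J$ for the outer terms.

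Applying $\RHom_R(X,-)$ produces a triangle whose two outer terms are $\RHom_R(X,E)[-d]$ and $\RHom_R(X,J)[-d+1]$. Since $E$ and $J$ are injective $R$-modules, there are natural isomorphisms $\Ext_R^j(X,E)\cong H^{-j}(X)^\vee$ and $\Ext_R^j(X,J)\cong\Hom_R(H^{-j}(X),J)$ for every $j\in\Z$; this is the standard collapse of the hyperext spectral sequence, which degenerates because $\Ext_R^p(-,E)=\Ext_R^p(-,J)=0$ for $p\geqq 1$. Naturality in the injective argument makes the morphism induced by $\RHom_R(X,\partial)$ correspond to $\Hom_R(H^{-j}(X),\partial)$ in each degree.

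For the middle term, Theorem \ref{AR} applied with $W=W_1$ and $\L=\L_{W_1}$ -- whose hypotheses hold because $I$ is an injective complex with $I^i=0$ for $i>d$ and $\sigma_{\leqq-1}X\in\L_{W_1}$ by assumption -- yields $H^n(\RHom_R(X,\gamma I))\cong H^n(X^\dagger)$ for all $n>d$. Taking the cohomology long exact sequence of the triangle above, reindexing by $j\geqq 1$ via $n=d+j-1$, and substituting the three identifications delivers precisely the sequence claimed in the proposition.

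The only subtlety is verifying that the connecting map in the LES is really $\Hom_R(H^{-j}(X),\partial)$ rather than just some map induced by $\partial$. This follows from the naturality in the second argument of the hyperext identification on the full subcategory of injective modules, applied to the morphism $\partial:J\to E$.
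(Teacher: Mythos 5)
Your proof is correct and takes essentially the same route as the paper: apply $\RHom_R(X,-)$ to the triangle $E[-d]\to\gamma I\to J[-d+1]\overset{\partial[-d+1]}{\longrightarrow}E[-d+1]$, identify the outer cohomology terms as $H^{-j}(X)^\vee$ and $\Hom_R(H^{-j}(X),J)$ using the injectivity of $E$ and $J$, replace the middle terms by $H^{d+i}(X^\dagger)$ via Theorem \ref{AR}, and read off the long exact sequence. The paper leaves these identifications and the naturality check on the connecting map implicit, and you have merely spelled them out.
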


This leads us to the following theorem that is more applicable to our computation. 
Recall that  $\D_{fg}( R)$ denotes the full subcategory of $\D$ consisting of all chain complexes whose cohomology modules are finitely generated $R$-modules.

%%%%%%%%%%%%%%%%%%%%%%%%%%%%%%%%
%%%%%%%%%%%%%%%%%%%%%%%%%%%%%%%%
\begin{theorem}\label{AR3}
Let $X$ be a chain complex in $\D_{fg}( R)$, and assume that  $\supp (\sigma_{\leqq -1}X) \subseteq W_{1}$. 
Then, for any $i>0$,  there is a short exact sequence 
$$
\begin{CD}  
 0@>>> H^{0}_{\m}(H^{-i}(X)) ^{\vee}@>>> H^{d+i}(X^{\dagger})^{\wedge} @>>> H^{1}_{\m} (H^{-i-1}(X))^{\vee} @>>> 0, 
\end{CD} \\ 
$$
and an isomorphism 
$$
H^{0}_{\m}(H^{-i}(X)) ^{\vee} \cong H_{\m}^0(H^{d+i}(X^{\dagger})). \\
$$
In the sequence above, ${}^{\wedge}$ denotes the $\m$-adic completion. 
\end{theorem}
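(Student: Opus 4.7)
The plan is to decompose the long exact sequence of Proposition \ref{AR2} into short exact sequences and identify their outer terms with Matlis duals of local cohomology modules via Grothendieck's local duality.

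From Proposition \ref{AR2} one extracts, for every $i>0$, the short exact sequence
$$0 \to \cok\bigl(\Hom_R(H^{-i}(X),\partial)\bigr) \to H^{d+i}(X^\dagger) \to \ker\bigl(\Hom_R(H^{-i-1}(X),\partial)\bigr) \to 0.$$
Analyzing the outer terms reduces to computing the kernel and cokernel of $\Hom_R(N,\partial)\colon\Hom_R(N,J)\to N^\vee$ for an arbitrary finitely generated $N$ with $\Supp N\subseteq W_1$, a condition that covers $N=H^{-j}(X)$ for every $j\geq 1$ by the hypothesis on $X$ together with Remark \ref{rem1}. The key observation is that for each index $j<d-1$ every summand $E_R(R/\p)$ of $I^j$ has $\dim R/\p=d-j>1$, so $\p\notin W_1$ and hence $\p\notin\Supp N$; the standard vanishing $\Hom_R(N,E_R(R/\p))=0$ whenever $\p\notin\Supp N$ (immediate from localization at $\p$) then gives $\Hom_R(N,I^j)=0$ for all $j<d-1$. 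Consequently the complex $\Hom_R(N,I)$ coincides with $\Hom_R(N,\gamma I)=[\Hom_R(N,J)\xrightarrow{\Hom_R(N,\partial)}N^\vee]$, and since this complex represents $N^\dagger$ (each $I^j$ being injective), reading off its cohomology in degrees $d-1$ and $d$ yields
$$\ker\bigl(\Hom_R(N,\partial)\bigr)\cong H^{d-1}(N^\dagger),\qquad \cok\bigl(\Hom_R(N,\partial)\bigr)\cong H^d(N^\dagger),$$
both finitely generated.

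Next, Grothendieck's local duality supplies a canonical isomorphism $H^k(N^\dagger)^\wedge\cong H^{d-k}_\m(N)^\vee$ over the $\m$-adic completion, valid for every finitely generated $R$-module $N$. Substituting $k=d-1,d$ into the identification above, and completing the short exact sequence of finitely generated modules extracted from Proposition \ref{AR2} (exactness is preserved because the middle term $H^{d+i}(X^\dagger)$ is finitely generated, as $X\in\D_{fg}(R)$ and $I$ is a bounded complex of injectives, and $\m$-adic completion is exact on short exact sequences of finitely generated modules over a Noetherian local ring), delivers precisely
$$0\to H^0_\m(H^{-i}(X))^\vee\to H^{d+i}(X^\dagger)^\wedge\to H^1_\m(H^{-i-1}(X))^\vee\to 0.$$

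For the final isomorphism one applies the left exact functor $\Gamma_\m$ to this sequence. The leftmost term is of finite length, hence fixed by $\Gamma_\m$; $\m$-adic torsion is unaltered by $\m$-adic completion, so $\Gamma_\m(H^{d+i}(X^\dagger)^\wedge)\cong H^0_\m(H^{d+i}(X^\dagger))$; and the rightmost term satisfies $\Gamma_\m(H^1_\m(N)^\vee)=0$ for every finitely generated $N$ of dimension at most one. This last vanishing follows from the equivalence $\Gamma_\m(M^\vee)=0\iff M=\m M$ (via the socle computation $\Hom_R(R/\m,M^\vee)\cong(M/\m M)^\vee$), which reduces the claim to $\m H^1_\m(N)=H^1_\m(N)$: after replacing $N$ with $N/\Gamma_\m(N)$ we may assume $N$ is Cohen-Macaulay of dimension one, and then for a parameter $x\in\m$ one has $H^1_\m(N)\cong N_x/N$, on which $x$ visibly acts surjectively. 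The main obstacle is the careful bookkeeping around $\m$-adic completion: the identifications of kernel and cokernel with $H^{d-1}(N^\dagger)$ and $H^{d}(N^\dagger)$ must be established before completion so that one has a genuine short exact sequence of finitely generated modules available, to which Grothendieck's local duality (which intrinsically involves completion) can then be applied termwise.
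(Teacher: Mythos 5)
Your argument is correct and follows the same overall route as the paper: extract from Proposition~\ref{AR2} the short exact sequence with outer terms $\Cok(\Hom_R(H^{-i}(X),\partial))$ and $\Ker(\Hom_R(H^{-i-1}(X),\partial))$, identify these as $H^d(N^\dagger)$ and $H^{d-1}(N^\dagger)$ for the relevant cohomology modules $N$ of dimension at most one, pass to completions via local duality, and apply $\Gamma_\m$ together with $H^0_\m(M)\cong H^0_\m(M^\wedge)$ (Lemma~\ref{exercise}). The one genuine point of departure is your justification of the vanishing $H^0_\m\bigl(H^1_\m(N)^\vee\bigr)=0$: the paper's Lemma~\ref{lemma1}(2) proves this by observing that $H^1_\m(N)^\vee\cong\Ext^{d-1}_R(N,I)^\wedge$ is a one-dimensional Cohen-Macaulay $\widehat R$-module (canonical duality preserves Cohen-Macaulayness), whereas you argue directly that $H^1_\m(N)=\m H^1_\m(N)$ by reducing to the one-dimensional Cohen-Macaulay case and realizing $H^1_\m(N)\cong N_x/N$ via the \v Cech complex, on which a parameter $x$ acts surjectively; the socle identity $\Hom_R(R/\m,M^\vee)\cong (M/\m M)^\vee$ then gives the vanishing. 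Your version is somewhat more elementary, avoiding the appeal to the Cohen-Macaulayness of $\Ext^{d-1}_R(N,I)$, while the paper's is shorter given that standard fact; both are sound.
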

%%%%%%%%%%%%%%%%%%%%%%%%%%%%%%%%
Note from Remark \ref{rem1} that the assumption for $X$ in Theorem \ref{AR3} is precisely saying that $\Supp H^i(X) \subseteq W_{1}$ for $i<0$.

Before proving Theorem \ref{AR3} we note the following lemmas.
\begin{lemma}\label{lemma1}
Let $\partial : J \to E$ be the map in (\ref{twotermcpx}) above. 
Suppose we are given a finitely generated $R$-module $M$ such that  $\dim M \leqq 1$. 
Then the following hold. 
\begin{enumerate}
\item
There are isomorphisms of $\widehat{R}$-modules
$$
\Ker (\Hom _R(M, \partial)) ^{\wedge} \cong H_{\m}^1(M)^{\vee}, \quad  
\Cok (\Hom _R(M, \partial)) ^{\wedge} \cong H_{\m}^0(M)^{\vee}.
$$
\item
$H^1_{\m} (M) ^{\vee}$ is a Cohen-Macaulay $\widehat{R}$-module of dimension one, in particular, it holds that $H_{\m}^0(H^1_{\m} (M) ^{\vee})=0$.  
\end{enumerate}
\end{lemma}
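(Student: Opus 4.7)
The plan is to interpret the kernel and cokernel of $\Hom_R(M,\partial)$ as derived Hom cohomologies, use the adjunction $\iota \dashv \gamma$ to replace $\gamma I$ by $I$, and then apply Grothendieck's local duality.

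For part (1): since $\gamma I$ is the two-term complex $J \to E$ of injectives concentrated in degrees $d-1$ and $d$, the complex $\Hom_R(M,\gamma I)$ already computes $\RHom_R(M,\gamma I)$, so
\[
H^{d-1}\RHom_R(M,\gamma I)=\Ker(\Hom_R(M,\partial)), \qquad H^d\RHom_R(M,\gamma I)=\Cok(\Hom_R(M,\partial)).
\]
The hypothesis $\dim M \leqq 1$ gives $\Supp M \subseteq W_1$, so $M \in \L=\L_{W_1}$; the counit $\iota\gamma I \to I$, combined with full faithfulness of $\iota$, then yields $\Ext^i_R(M,\gamma I) \cong \Ext^i_R(M,I)$ for every $i$. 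Grothendieck's local duality, applied to $I$ in the normalization fixed in \S 3, supplies $\Ext^{d-i}_R(M,I)^{\wedge} \cong H^i_\m(M)^{\vee}$ for all $i$, and the cases $i=0,1$ give the two claimed isomorphisms.

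For part (2): set $T=H^0_\m(M)$ and $M'=M/T$. Since $T$ has finite length, the local cohomology long exact sequence collapses to $H^1_\m(M) \cong H^1_\m(M')$, hence $H^1_\m(M)^{\vee} \cong H^1_\m(M')^{\vee}$. If $M'=0$ the assertion is vacuous; otherwise $M'$ has positive depth and dimension $\leqq 1$, so it is Cohen-Macaulay of dimension one. Choose a regular element $x\in\m$ on $M'$ and apply $\RHom_R(-,I)$ to $0 \to M' \xrightarrow{x} M' \to M'/xM' \to 0$. Since $\dim(M'/xM') = 0$, local duality gives $\Ext^{d-1}_R(M'/xM',I)^{\vee} \cong H^1_\m(M'/xM') = 0$, and the resulting long exact $\Ext$ sequence forces multiplication by $x$ on $\Ext^{d-1}_R(M',I)$ to be injective. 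Through the isomorphism $\Ext^{d-1}_R(M',I)^{\wedge} \cong H^1_\m(M)^{\vee}$ from part (1), this means $x$ is a non-zerodivisor on $H^1_\m(M)^{\vee}$, so $H^0_\m(H^1_\m(M)^{\vee})=0$. The bound $\dim_{\widehat{R}} H^1_\m(M)^{\vee} \leqq 1$ follows from $\Supp \Ext^{d-1}_R(M',I) \subseteq \Supp M'$, and together with positive depth this yields that $H^1_\m(M)^{\vee}$ is Cohen-Macaulay of dimension one.

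The main obstacle will be carefully tracking the Matlis dual and $\m$-adic completion when transferring the regular-element property from $\Ext^{d-1}_R(M',I)$ to the $\widehat{R}$-module $H^1_\m(M)^{\vee}$; once one checks that completion preserves non-zerodivisors, dimension and depth on finitely generated modules, both parts reduce to routine local duality bookkeeping.
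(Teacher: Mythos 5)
Your proof is correct and follows essentially the same route as the paper: identify $\Ker$ and $\Cok$ of $\Hom_R(M,\partial)$ with $H^{d-1}(M^\dagger)$ and $H^d(M^\dagger)$ and apply local duality for (1), then reduce to $M$ one-dimensional Cohen--Macaulay for (2). The only differences are cosmetic: the paper gets the identification in (1) by observing directly that $\Hom_R(M,E_R(R/\p))=0$ when $\dim R/\p>1$ (rather than via the adjunction $\iota\dashv\gamma$), and in (2) it simply cites the known fact that $\Ext^{d-1}_R(M',I)$ is one-dimensional Cohen--Macaulay, which you instead verify by your regular-element argument.
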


\begin{proof}
(1)
Noting that $\dim M \leqq 1$, we have $\Hom _R (M, \bigoplus _{\dim R/ \p =i} E_R(R/\p)) = 0$ for all $i >1$. 
It hence follows the equalities 
$$
\Ker (\Hom _R(M, \partial))= H ^{d-1} (M^{\dagger}) \ \ \text{and} \quad  
\Cok (\Hom _R(M, \partial))= H ^{d}(M^{\dagger}).
$$
On the other hand the local duality theorem implies that 
$$ 
H ^{d-1} (M^{\dagger})^{\wedge} \cong H^1_{\m}(M) ^{\vee} \ \ \text{and} \quad   H ^{d} (M^{\dagger})^{\wedge} \cong H^0_{\m}(M) ^{\vee}. 
$$

(2)
We may assume $\dim M =1$. 
Note that  $\overline{M} = M/H_{\m}^0(M)$ is a one-dimensional Cohen-Macaulay $R$-module, and $H^1_{\m}(M)=H^1_{\m} (\overline{M})$. 
Replace $M$ by $\overline{M}$, and we may assume that $M$ is a one-dimensional Cohen-Macaulay module.
Then it is known that  $(M^{\dagger}) [d-1] = \Ext _R^{d-1}(M, I)$ is again one-dimensional Cohen-Macaulay, hence so is the completion $\Ext _R^{d-1}(M, I)^{\wedge}$. 
However it follows from the local duality that   $\Ext _R^{d-1}(M, I)^{\wedge} = H^1_{\m}(M)^{\vee}$. 
\end{proof}
%%%%%%%%%%%%%%%%%%%%%%%%%%%%%%%%%
%%%%%%%%%%%%%%%%%%%%%%%%%%%%%%%%%
\begin{lemma}\label{exercise}
Let  $M$  be a finitely generated module over a local ring  $(R, \m)$. 
Then the equality 
$$
H_{\m}^{0}(M) \cong H_{\m}^{0}(M^{\wedge})
$$
holds. 
\end{lemma}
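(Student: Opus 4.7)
The plan is to reduce the identification to the combination of two elementary facts: that a finite length module equals its own completion, and that depth is preserved under completion. The short exact sequence
$$0 \longrightarrow H^0_\m(M) \longrightarrow M \longrightarrow N \longrightarrow 0, \qquad N := M/H^0_\m(M),$$
will be the vehicle.

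First I would observe that, since $M$ is finitely generated over the Noetherian ring $R$, the submodule $H^0_\m(M)$ is finitely generated and $\m$-power-torsion, hence annihilated by a single power $\m^N$. It is therefore of finite length, and in particular $\m$-adically discrete, so the natural map $H^0_\m(M)\to H^0_\m(M)^\wedge$ is an isomorphism. Next, because $\hat{R}$ is $R$-flat, the functor $(-)^\wedge$ applied to the short exact sequence above yields another short exact sequence
$$0 \longrightarrow H^0_\m(M) \longrightarrow M^\wedge \longrightarrow N^\wedge \longrightarrow 0,$$
whose left-exact image under $H^0_\m(-)$ reads
$$0 \longrightarrow H^0_\m(M) \longrightarrow H^0_\m(M^\wedge) \longrightarrow H^0_\m(N^\wedge).$$
So the lemma is reduced to showing $H^0_\m(N^\wedge)=0$.

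To establish this vanishing, I would use that $H^0_\m(N)=0$ by construction, so (if $N\ne 0$) there exists an element $x\in\m$ that is $N$-regular, i.e. multiplication by $x$ is injective on $N$. Flatness of $\hat R$ over $R$ preserves this injectivity, so $x$ is also $N^\wedge$-regular, and consequently each power $x^n$ acts injectively on $N^\wedge$. Any element of $N^\wedge$ annihilated by some $\m^n$ is in particular annihilated by $x^n$, hence is zero, proving $H^0_\m(N^\wedge)=0$.

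The only step with any real content is the vanishing $H^0_\m(N^\wedge)=0$; once that is in hand, the rest is the formal snake of the exact sequence. An alternative, slicker route would invoke the flat base change $H^0_{\m\hat R}(M^\wedge)\cong H^0_\m(M)\otimes_R\hat R$ for local cohomology together with $H^0_\m(-)=H^0_{\m\hat R}(-)$ on $\hat R$-modules, but the argument sketched above is self-contained and avoids quoting the general base change theorem.
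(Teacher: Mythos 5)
Your proof is correct and takes essentially the same route as the paper: you work with the short exact sequence $0 \to H^0_\m(M) \to M \to M/H^0_\m(M) \to 0$, note that $H^0_\m(M)$ is unaffected by completion because it is $\m$-adically discrete, complete the sequence using flatness, and finish by showing the right-hand term has positive depth after completion. The paper condenses the last step into the one-line assertion $\depth \bar M^\wedge > 0$ since $\depth \bar M > 0$, whereas you spell out the regular-element argument explicitly, but the content is identical.
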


\begin{proof}
Note that  $H^{0}_{\m}(M)$  is a unique submodule $N$ of $M$ such that 
$N$  is of finite length and  $M/N$  has no nontrivial submodule of finite length (or equivalently $\depth \ M/N >0$). 
Taking the $\m$-adic completion for modules in a short exact sequence 
$0 \to   H^{0}_{\m}(M) \to M \to \bar{M} \to 0$, and noting that the $\m$-adic topology on $H^{0}_{\m}(M)$ is discrete, we have an exact sequence 
$$
0 \to H^{0}_{\m}(M) \to M^{\wedge} \to \bar{M}^{\wedge} \to 0.
$$
Since  $\depth \bar{M}^{\wedge} >0$  as  $\depth \bar{M}>0$,  
we have the desired equality $H_{\m}^{0}(M) = H_{\m}^{0}(M^{\wedge})$. 
\end{proof}
%%%%%%%%%%%%%%%%%%%%%%%%%%%%%%%%%
%%%%%%%%%%%%%%%%%%%%%%%%%%%%%%%%%
Now we proceed to the proof of Theorem \ref{AR3}. 
It follows from Theorem \ref{AR2} that there is an exact sequence: 
$$
0 \to \Cok (\Hom _R(H^{-i}(X), \partial)) \to H^{d+i} (X^{\dagger}) \to \Ker (\Hom _R(H^{-i-1}(X), \partial)) \to 0, 
$$
for $i>0$. 
Since  $H^{-i}(X)$ and $H^{-i-1}(X)$ are finitely generated and their dimensions are at most one, we can apply Lemma \ref{lemma1} and get a short exact sequence 
$$
\begin{CD}  
 0@>>> H^{0}_{\m}(H^{-i}(X)) ^{\vee}@>>> H^{d+i}(X^{\dagger})^{\wedge} @>>> H^{1}_{\m} (H^{-i-1}(X))^{\vee} @>>> 0, 
\end{CD} \\ 
$$
as in Theorem \ref{AR2}. 
To show the isomorphism in Theorem \ref{AR3}, apply the functor $H_{\m}^{0}$  to this short exact and it is enough to notice from Lemma \ref{lemma1}(2) and \ref{exercise} that 
$
H_{\m}^0(H^{1}_{\m} (H^{-i-1}(X))^{\vee})= 0
$
and 
$
H^0_{\m}(H^{d+i} (X^{\dagger}))
\cong H^0_{\m}(H^{d+i} (X^{\dagger})^{\wedge}).
$
\qed 
%%%%%%%%%%%%%%%%%%%%%%%%%%%%%%%%

Now let us assume that  $(R, \m)$ is Cohen-Macaulay and let  $M, N \in \CM ( R )$. 
We apply Theorem  \ref{AR3} above to  $X = \tr M \Lotimes _{R} N$, and we get a theorem of Iyama and Wemyss \cite{IW}. 
 
\begin{corollary}\label{AR4}
Let  $(R, \m)$ be a Cohen-Macaulay local ring and let  $M, N \in \CM ( R )$. 
We assume that  $\NF (M) \cap \NF (N) \subseteq W_{1}$. 
Then, for each $i>0$, there is a short exact sequence; 
$$
\begin{CD}  
 0 \to H^{0}_{\m}(\pHom _{R}(M, \Omega^{i-1}N)) ^{\vee}@>>> \Ext _{R}^{i}(N, \tau M)^{\wedge}  @>>> H^{1}_{\m} (\pHom _{R}(M, \Omega ^{i}N))^{\vee} \to 0,
\end{CD}
$$
and an isomorphism; 
$$
H^{0}_{\m}(\pHom _{R}(M, \Omega^{i-1}N)) ^{\vee} \cong H_{\m}^0 (\Ext _{R}^{i}(N, \tau M)). 
$$
\end{corollary}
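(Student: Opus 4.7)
The plan is to apply Theorem \ref{AR3} to the complex $X=\tr M\Lotimes_R N$, in the same spirit as the proof of Corollary \ref{ARcor}, and then to rewrite each of the cohomology modules appearing in the conclusion of Theorem \ref{AR3} in terms of the stable-Hom and Ext groups named in the statement. Since $\tr M$ and $N$ are finitely generated, $X$ belongs to $\D_{fg}(R)$ and $H^{-i}(X)\cong \Tor_i^R(\tr M,N)$ for $i\geq 1$.

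First I would verify the support hypothesis. Localizing at a prime $\p\notin \NF(M)\cap \NF(N)$ makes either $N_\p$ free, or else $M_\p$ (and hence $(\tr M)_\p$ up to free summands) free over $R_\p$; in either case the higher Tors vanish. Exactly as in the proof of Corollary \ref{ARcor}, this gives $\Supp H^{-i}(X)\subseteq \NF(M)\cap \NF(N)\subseteq W_1$ for $i\geq 1$, which by Remark \ref{rem1} is the hypothesis $\supp(\sigma_{\leqq -1}X)\subseteq W_1$ needed to invoke Theorem \ref{AR3}.

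Next I would identify the cohomology modules on both sides. On the left (negative) side, I would dimension-shift: using short exact sequences of the form $0\to \Omega^{j}N\to F\to \Omega^{j-1}N\to 0$ with $F$ free and the vanishing of $\Tor_j^R(\tr M,F)$ for $j\geq 1$, one reduces inductively $\Tor_i^R(\tr M,N)\cong \Tor_1^R(\tr M,\Omega^{i-1}N)$, which by \cite[Lemma 3.9]{Ybook} is $\pHom_R(M,\Omega^{i-1}N)$. Hence $H^{-i}(X)\cong \pHom_R(M,\Omega^{i-1}N)$ for all $i\geq 1$. On the right (positive) side, the chain of adjunctions already used in the proof of Corollary \ref{ARcor} yields
\[
H^{d+i}(X^\dagger)\cong \Ext_R^{d+i}(\tr M,N^\dagger)\cong \Ext_R^{i}(\Omega^d\tr M,N^\dagger)\cong \Ext_R^{i}(N,[\Omega^d\tr M]^\dagger)=\Ext_R^{i}(N,\tau M).
\]

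Finally, substituting the three identifications $H^{-i}(X)\cong \pHom_R(M,\Omega^{i-1}N)$, $H^{-i-1}(X)\cong \pHom_R(M,\Omega^{i}N)$, and $H^{d+i}(X^\dagger)\cong \Ext_R^{i}(N,\tau M)$ into the short exact sequence and the isomorphism provided by Theorem \ref{AR3} yields exactly the two assertions of the corollary. The one step that requires a little care to present cleanly is the iterated dimension-shift isomorphism $\Tor_i^R(\tr M,N)\cong \pHom_R(M,\Omega^{i-1}N)$; once this is in hand, everything else is direct substitution.
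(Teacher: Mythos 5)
Your proposal is correct and follows exactly the paper's intended route: the paper proves Corollary \ref{AR4} in one sentence by applying Theorem \ref{AR3} to $X=\tr M\Lotimes_R N$, and you supply precisely the identifications $H^{-i}(X)\cong\pHom_R(M,\Omega^{i-1}N)$ (via Tor dimension shift and \cite[Lemma 3.9]{Ybook}) and $H^{d+i}(X^\dagger)\cong\Ext^i_R(N,\tau M)$ (via the adjunction chain from the proof of Corollary \ref{ARcor}) that the paper leaves implicit. The support verification is also the same as in Corollary \ref{ARcor}'s proof, just with $W_1$ in place of $\{\m\}$.
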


%%%%%%%%%%%%%%%%%%%%%%%%%%%%%%%%
%%%%%%%%%%%%%%%%%%%%%%%%%%%%%%%%
  
\section{A remark on the Auslander-Reiten conjecture}

Now in this section we restrict ourselves to consider the case where  $R$ is Gorenstein. 
In this case it is easy to see that the syzygy functor  $\Omega : \pCM (R) \to  \pCM (R)$  is an auto-equivalence.
Hence, in particular, one can define the cosyzygy functor  $\Omega ^{-1}$  on  $\pCM (R)$  as the inverse of  $\Omega$.
We note from  \cite{RB} and \cite[2.6]{H}  that  $\pCM (R)$  is a triangulated category with shift functor  $[1] = \Omega ^{-1}$.
Note that  $\pHom _{R}(M, N) \cong  \Ext_{R}^{1}(M, \Omega ^{1}N)$ for all $M, N \in \CM ( R )$. 
Note also that, since  $R$  is Gorenstein, we have  
$$
\tau M = [\Omega ^{d}\tr M]^{\dagger} \cong \Omega ^{d-2}(M^{*})^{*} \cong M[d-2]. 
$$ 
Therefore Corollary \ref{ARcor} implies the fundamental duality. 
%%%%%%%%%%%%%%%%%%%%%%%%%%%%%%%%
%%%%%%%%%%%%%%%%%%%%%%%%%%%%%%%%
\begin{corollary}\label{ARclasic}
Let  $R$  be a Gorenstein local ring of dimension $d$.
Assume that  ${M}, {N} \in \CM (R)$ satisfy  $\NF (M) \cap \NF (N) \subseteq \{\m\}$. 
Then there is a functorial isomorphism
$$
\pHom _R (M, N)^{\vee} \cong \pHom _R (N, M[d-1]).
$$

\end{corollary}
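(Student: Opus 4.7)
The plan is to deduce the statement directly from Corollary \ref{ARcor} by specializing to the Gorenstein situation. Since the hypothesis $\NF(M)\cap\NF(N)\subseteq\{\m\}$ is precisely the assumption of Corollary \ref{ARcor}, we immediately obtain a functorial isomorphism
$$
\pHom_R(M,N)^{\vee}\;\cong\;\Ext^1_R(N,\tau M),\qquad \tau M=[\Omega^d(\tr M)]^{\dagger}.
$$
Thus the task is to rewrite the right-hand side in terms of $\pHom_R(N,M[d-1])$, using the special features of the Gorenstein hypothesis.

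The first step is to identify $\tau M$ with $M[d-2]$ in $\pCM(R)$, as already signposted in the paragraph preceding the corollary. Because $R$ is Gorenstein, the canonical dual $(-)^{\dagger}$ restricted to $\CM(R)$ agrees with $(-)^{*}=\Hom_R(-,R)$, and $(-)^{*}$ is an exact duality on $\pCM(R)$ satisfying $(\Omega L)^{*}\cong L^{*}[1]$ (since $[1]=\Omega^{-1}$). I would dualize a minimal free presentation $F_1\to F_0\to M\to 0$ of $M$, producing the four-term exact sequence $0\to M^{*}\to F_0^{*}\to F_1^{*}\to \tr M\to 0$; splitting this into two short exact sequences and reading off syzygies yields $\tr M\cong M^{*}[2]$ in $\pCM(R)$. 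Combining with reflexivity $M^{**}\cong M$ and the shift rule for $(-)^{*}$, I get
$$
\tau M\;=\;(\Omega^d\tr M)^{*}\;\cong\;(M^{*}[2-d])^{*}\;\cong\;M^{**}[d-2]\;\cong\;M[d-2]
$$
in $\pCM(R)$.

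The second step is the standard identification: for $L\in\CM(R)$ over a Gorenstein ring, $\Ext^1_R(N,L)\cong\pHom_R(N,\Omega^{-1}L)=\pHom_R(N,L[1])$. Applied with $L=M[d-2]$ this gives
$$
\Ext^1_R(N,\tau M)\;\cong\;\Ext^1_R(N,M[d-2])\;\cong\;\pHom_R(N,M[d-1]),
$$
and composing with the isomorphism from Corollary \ref{ARcor} produces the desired duality.

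The only genuine work is the stable-category computation $\tau M\cong M[d-2]$; everything else is a direct invocation of previously established results. The main care needed is bookkeeping of the shifts under $(-)^{*}$, but since $(-)^{*}$ is an exact duality on $\pCM(R)$ and sends $\Omega$ to $\Omega^{-1}$, the computation is essentially forced by functoriality and reflexivity.
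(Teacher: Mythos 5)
Your proof is correct and follows essentially the same route as the paper: the paper's argument (given in the paragraph preceding the corollary) likewise reduces to the identification $\tau M=[\Omega^d\tr M]^{\dagger}\cong(\Omega^{d-2}M^{*})^{*}\cong M[d-2]$ in $\pCM(R)$, and then invokes Corollary \ref{ARcor} together with $\Ext^1_R(N,L)\cong\pHom_R(N,L[1])$. Your write-up merely expands the bookkeeping (via $\tr M\cong M^{*}[2]$, reflexivity, and the shift rule for $(-)^{*}$) that the paper leaves implicit.
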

%%%%%%%%%%%%%%%%%%%%%%%%%%%%%%%%
%%%%%%%%%%%%%%%%%%%%%%%%%%%%%%%%
Note that this is the case for any $M$ and $N$ if $R$ has at most an isolated singularity. 
On the other hand Theorem \ref{AR3} implies the following:  

\begin{corollary}\label{ARIW}
Let  $R$  be a Gorenstein local ring of dimension $d$.
Assume that  ${M}, {N} \in \CM (R)$ satisfy  $\NF (M) \cap \NF (N) \subseteq W_1$. 
Then there is a short exact sequence 
$$
 0 \to H^{0}_{\m}(\pHom _{R}(M, N[d-1])) ^{\vee}\to  \pHom _{R}(N, M)^{\wedge}  \to H^{1}_{\m} (\pHom _{R}(M, N[d-2]))^{\vee} \to 0.
$$
\end{corollary}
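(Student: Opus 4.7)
The plan is to reduce Corollary \ref{ARIW} to Corollary \ref{AR4} by a shift in the second variable. Since $R$ is Gorenstein, the cosyzygy functor $\Omega^{-1}$ is well defined on $\CM(R)$ and preserves MCM modules, so I set $N' := \Omega^{-(d-1)}N$, which is maximal Cohen-Macaulay and represents the shifted object $N[d-1]$ in $\pCM(R)$.

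The first real step is to verify $\NF(N') = \NF(N)$, so that the pair $(M, N')$ meets the hypothesis of Corollary \ref{AR4}. This is a local matter: the short exact sequence $0 \to N_{\p} \to F_{\p} \to (\Omega^{-1}N)_{\p} \to 0$ splits whenever $N_{\p}$ is $R_{\p}$-free, forcing $(\Omega^{-1}N)_{\p}$ to be free; conversely, freeness of $(\Omega^{-1}N)_{\p}$ exhibits $N_{\p}$ as a direct summand of a free module. Iterating $d-1$ times gives the claim. With this verified, specializing Corollary \ref{AR4} to $(M, N')$ at $i = 1$ produces
\begin{equation*}
0 \to H^0_{\m}(\pHom_R(M, N'))^{\vee} \to \Ext^1_R(N', \tau M)^{\wedge} \to H^1_{\m}(\pHom_R(M, \Omega N'))^{\vee} \to 0.
\end{equation*}

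It then remains to translate each term into the form stated in Corollary \ref{ARIW}. Because $N' = N[d-1]$ and $\Omega N' \simeq N[d-2]$ in $\pCM(R)$, the outer two terms take exactly the desired shape. For the middle term, combining the Gorenstein identification $\tau M \cong M[d-2]$ recorded just before Corollary \ref{ARclasic} with the identity $\Ext^1_R(A,B) \cong \pHom_R(A, B[1])$ (a restatement of $\pHom_R(A,B) \cong \Ext^1_R(A,\Omega B)$) yields
\begin{equation*}
\Ext^1_R(N', \tau M) \cong \pHom_R(N[d-1], M[d-1]) \cong \pHom_R(N, M),
\end{equation*}
the last step being cancellation of the shift on both arguments. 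Splicing these identifications into the sequence produces precisely the short exact sequence claimed. The only nonformal point is the invariance of the non-free locus under $\Omega^{-1}$ on a Gorenstein ring; once this is in hand, the rest is a dictionary translation in the triangulated structure of $\pCM(R)$.
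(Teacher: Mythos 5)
Your proof is correct and follows essentially the same route the paper intends: you specialize Corollary~\ref{AR4} at $i=1$ after replacing $N$ by its cosyzygy $N' = \Omega^{-(d-1)}N$, then translate all three terms using the Gorenstein identifications $[1]=\Omega^{-1}$, $\tau M \cong M[d-2]$, and $\Ext^1_R(A,B)\cong\pHom_R(A,B[1])$. The auxiliary check that $\NF(\Omega^{-1}N)=\NF(N)$ (via splitting of the localized cosyzygy sequence, using $\Ext^1_{R_\p}((\Omega^{-1}N)_\p,R_\p)=0$ for MCM modules over the Gorenstein ring $R_\p$) is exactly the point that needs verifying, and your argument for it is sound.
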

%%%%%%%%%%%%%%%%%%%%%%%%%%%%%%%%
%%%%%%%%%%%%%%%%%%%%%%%%%%%%%%%%

Araya \cite{A} shows that Corollary \ref{ARclasic} implies the Auslander-Reiten conjecture for Gorenstein rings with isolated singularity of dimension not less than $2$. 
Since the ring $R$ is Gorenstein, noting that a module $M$ is a maximal Cohen-Macaulay module if and only if $\Ext_{R}^{i}(M, R) =0$  for all $i>0$, 
we can state the AR conjecture for Gorenstein rings as follows: 
 
\begin{conjecture}\label{ARconjecture}
Let  $R$  be a Gorenstein local ring as above and  let  $M$ be in $\CM ( R )$. 
If  $\Ext_{R}^{i}(M, M) =0$  for all $i>0$, then  $M$ is a free $R$-module.  
\end{conjecture}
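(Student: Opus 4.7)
The plan is to mimic the strategy Araya~\cite{A} used in the isolated-singularity case: combine an induction on $d=\dim R$ with Corollary~\ref{ARclasic} to reduce, in each dimension, to a situation where AR duality forces triviality of stable endomorphisms.

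I would set up the induction on $d$. The base cases $d\in\{0,1\}$ demand separate classical arguments and will turn out to be the main obstacle. For the inductive step, fix $d\ge 2$ and assume the conjecture in every strictly smaller dimension. Given $M\in\CM(R)$ with $\Ext^i_R(M,M)=0$ for all $i>0$, pick any $\p\in\Spec(R)\setminus\{\m\}$. Then $R_\p$ is Gorenstein of dimension $\height\p<d$, the localization $M_\p$ is maximal Cohen-Macaulay over $R_\p$ by the depth--dimension inequality, and Ext vanishing is preserved under localization. The inductive hypothesis then forces $M_\p$ to be $R_\p$-free, so $\p\notin\NF(M)$. Therefore $\NF(M)\subseteq\{\m\}$.

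Now I apply Corollary~\ref{ARclasic} with $N=M$; the hypothesis $\NF(M)\cap\NF(M)\subseteq\{\m\}$ holds by the previous step. Since $R$ is Gorenstein and $d-1\ge 1$, iterating the formula $\pHom_R(M,N)\cong\Ext^1_R(M,\Omega N)$ gives $\pHom_R(M,M[d-1])\cong\Ext^{d-1}_R(M,M)$, which vanishes by hypothesis. Corollary~\ref{ARclasic} then yields $\pHom_R(M,M)^{\vee}=0$, so $\pHom_R(M,M)=0$ by Matlis duality. The identity of $M$ therefore factors through a free module, exhibiting $M$ as a direct summand of a free $R$-module, hence free.

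The hard part is the base of the induction, most notably the case $d=1$, where the formula $\pHom_R(M,M[d-1])\cong\Ext^{d-1}_R(M,M)$ collapses to the tautology $\pHom_R(M,M)\cong\pHom_R(M,M)$ and Corollary~\ref{ARclasic} returns no information. One could try to lighten the inductive burden by replacing Corollary~\ref{ARclasic} with Corollary~\ref{ARIW}: the weaker assumption $\NF(M)\subseteq W_1$ requires freeness only at primes $\p$ with $\height\p\le d-2$, and the conclusion then follows from $\Ext^{d-1}_R(M,M)=\Ext^{d-2}_R(M,M)=0$ alone, which is exactly the partial theorem announced in the introduction. But the genuine low-dimensional obstruction remains untouched by the AR principle, which is why the full conjecture is still open.
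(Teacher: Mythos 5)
You have correctly recognized that the statement is an open \emph{conjecture}, not a theorem that the paper proves. The paper records only partial results in its direction: immediately after stating the conjecture it observes, following Araya, that Corollary~\ref{ARclasic} disposes of the case of an isolated singularity with $d\geqq 2$, and then Theorem~\ref{last} and Corollary~\ref{end} push this, via Corollary~\ref{ARIW}, to $\NF(M)\subseteq W_1$ with $d\geqq 3$. Your analysis aligns with both of these and correctly locates the residual obstruction.

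Your inductive scaffold is itself worth a comment, since it is not the route the paper takes. The localization step is sound: for $\p\neq\m$ the ring $R_\p$ is Gorenstein of dimension $\height\p<d$, $M_\p$ is maximal Cohen--Macaulay, and $\Ext$-vanishing descends, so the inductive hypothesis would indeed force $\NF(M)\subseteq\{\m\}$, after which Corollary~\ref{ARclasic} together with $\pHom_R(M,M[d-1])\cong\Ext^{d-1}_R(M,M)$ for $d\geqq 2$ closes the step. The paper does not run this induction; it simply states the conclusion directly under the isolated-singularity (respectively $\NF(M)\subseteq W_1$) hypothesis, leaving the reduction to the reader. What your reduction makes visible is that the whole conjecture for Gorenstein rings hinges on the cases $d=0$ and $d=1$, exactly where, as you note, Corollary~\ref{ARclasic} degenerates to a tautology and the AR principle returns no information. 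Since your proposal ends by explaining why the strategy cannot be completed rather than by producing a proof, it is not a proof of the conjecture; but that is the honest and correct call here, and your account of what the AR principle can and cannot deliver matches the paper's own assessment.
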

In fact, the assumption of the conjecture is equivalent to that 
$\pHom _{R}(M, M[i])=0$  for  $i>0$. 
 On the other hand $M$ is free if and only if  $\pHom _{R}(M, M) =0$. 
Therefore it is restated in the following form: 

\vspace{4pt} 
$-$ If $\pHom _{R}(M, M[i])=0$  for  $i>0$, then  $\pHom _{R}(M, M) =0$. 
\vspace{4pt} 

By virtue of Corollary \ref{ARclasic},  the conjecture is trivially true if $R$  is an isolated singularity and $d \geqq 2$. 
This is what Araya proved in his paper \cite{A}.

In contrast to this, we can prove the following theorem by using Corollary \ref{ARIW}. 
%%%%%%%%%%%%%%%%%%%%%%%%%%%%%%%%
%%%%%%%%%%%%%%%%%%%%%%%%%%%%%%%%
\begin{theorem} \label{last}
Let $(R, \m)$  be a Gorenstein local ring of dimension $d$ and let  $M, N \in \CM ( R )$. 
Assume the following conditions: 
\begin{enumerate}
\item 
$\NF (M) \cap \NF (N) \subseteq W_{1}$, 
\item 
$\depth \ \pHom _{R} (M, N [d-1]) > 0$, 
\item 
$\depth \ \pHom _{R}(M, N [d-2]) >1$.
\end{enumerate}
Then we have $\pHom (N, M) = 0$. 
\end{theorem}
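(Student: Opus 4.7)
The plan is to feed the hypotheses directly into the short exact sequence provided by Corollary \ref{ARIW} and then pass from vanishing of the completion to vanishing of the module itself.

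First, I would unpack the depth hypotheses in terms of local cohomology. By the standard characterization of depth via the vanishing of local cohomology modules for finitely generated modules, condition (2) yields $H^0_{\m}(\pHom_R(M, N[d-1])) = 0$, and condition (3) yields both $H^0_{\m}(\pHom_R(M, N[d-2])) = 0$ and $H^1_{\m}(\pHom_R(M, N[d-2])) = 0$. (I would first verify that $\pHom_R(M, N[i])$ is indeed finitely generated for $i > 0$, which holds because in the Gorenstein setting it is isomorphic to the finitely generated module $\Ext^i_R(M, N)$.)

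Next, since condition (1) gives $\NF(M) \cap \NF(N) \subseteq W_1$, Corollary \ref{ARIW} applies with $i = 1$, yielding the short exact sequence
$$
0 \to H^{0}_{\m}(\pHom _{R}(M, N[d-1])) ^{\vee}\to  \pHom _{R}(N, M)^{\wedge}  \to H^{1}_{\m} (\pHom _{R}(M, N[d-2]))^{\vee} \to 0.
$$
By the previous paragraph, the two flanking terms vanish, so $\pHom_R(N, M)^{\wedge} = 0$.

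Finally, I would observe that $\pHom_R(N, M)$ is a finitely generated $R$-module (as a quotient of $\Hom_R(N, M)$ between two finitely generated modules). Since the $\m$-adic completion $\widehat{R}$ is faithfully flat over $R$, vanishing of $\pHom_R(N, M) \otimes_R \widehat{R} = \pHom_R(N, M)^{\wedge}$ forces $\pHom_R(N, M) = 0$, as desired.

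The argument is essentially a plug-in into Corollary \ref{ARIW}, so there is no genuine obstacle; the only point requiring care is the finite generation of $\pHom_R(M, N[d-1])$ and $\pHom_R(M, N[d-2])$, which is needed to translate the depth assumptions into vanishing of $H^0_{\m}$ and $H^1_{\m}$ and to justify the final faithful-flatness step.
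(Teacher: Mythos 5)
Your proof is correct and follows the same route as the paper: translate the depth hypotheses into vanishing of $H^0_{\m}$ and $H^1_{\m}$, feed them into the short exact sequence of Corollary \ref{ARIW} to get $\pHom_R(N,M)^{\wedge}=0$, and then descend by faithful flatness (or, equivalently, the injectivity of $L\to L^{\wedge}$ for finitely generated $L$). The paper treats these steps as immediate; you have simply made them explicit.
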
 
%%%%%%%%%%%%%%%%%%%%%%%%%%%%%%%%
%%%%%%%%%%%%%%%%%%%%%%%%%%%%%%%%

Note in the theorem that we adopt the convention that the depth of the zero module is $+\infty$, so that the conditions (2)(3) contain the case when 
$\pHom _{R} (M, N [d-1]) = \pHom _{R}(M, N [d-2]) = 0$.

The proof of Theorem \ref{last} is straightforward from Corollary \ref{ARIW}. 
In fact the assumptions for $M, N$ in the theorem imply the vanishing of the both ends in the short exact sequence in Corollary \ref{ARIW}, hence we have   $\pHom (N, M) = 0$.

The following is a direct consequence of the theorem, which generalizes the aforementioned Araya's result. 

%%%%%%%%%%%%%%%%%%%%%%%%%%%%%%%%
%%%%%%%%%%%%%%%%%%%%%%%%%%%%%%%%
\begin{corollary}\label{end}
Let $(R, \m)$  be a Gorenstein local ring of dimension $d$ and let  $M \in \CM ( R )$. 
Assume that $\NF (M) \subseteq W_{1}$. 
Furthermore assume 
$$
\depth \ \pHom _{R} (M, M [d-1]) > 0, \quad \depth \ \pHom _{R}(M, M [d-2]) >1.
$$
Then $M$ is a free $R$-module. 
\end{corollary}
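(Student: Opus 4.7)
The plan is essentially to specialize Theorem \ref{last} to the case $N = M$, and then translate the vanishing of the stable endomorphism group into freeness of $M$.

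First, I would verify that every hypothesis of Theorem \ref{last} is satisfied with the choice $N = M$. Condition (1) reduces to $\NF(M) \cap \NF(M) = \NF(M) \subseteq W_{1}$, which is exactly the non-free locus assumption of the corollary. Conditions (2) and (3) on the depths of $\pHom_R(M, M[d-1])$ and $\pHom_R(M, M[d-2])$ are stated verbatim in the hypotheses. So Theorem \ref{last} applies directly and yields
$$
\pHom_R(M, M) = 0.
$$

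Second, I would convert this vanishing into freeness of $M$. By definition, $\pHom_R(M, M) = 0$ means that the identity map $\mathrm{id}_M$ factors through a free $R$-module: there exist $R$-linear maps $M \to F \to M$ composing to $\mathrm{id}_M$ with $F$ free. This exhibits $M$ as a direct summand of the finitely generated free module $F$. Since $(R, \m)$ is local and $M$ is finitely generated, any such direct summand is itself free (for instance by Nakayama's lemma applied to a minimal generating set), so $M$ is free as required.

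I do not expect any substantive obstacle: essentially all of the technical content (the AR principle, the local-duality-style short exact sequence, and the depth argument closing off the two ends) has been absorbed into Theorem \ref{last} via Corollary \ref{ARIW}. The corollary here is a direct specialization, and the only conceptual point is the standard observation that, over a local ring, the vanishing of stable self-Hom of a finitely generated module forces freeness.
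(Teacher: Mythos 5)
Your proposal is correct and follows exactly the paper's route: the corollary is stated as a direct consequence of Theorem \ref{last} applied with $N=M$, and the paper has already observed (just after Conjecture \ref{ARconjecture}) that $M$ is free if and only if $\pHom_R(M,M)=0$, which is the same standard local-ring argument you spell out.
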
 

This result assures us that the AR conjecture \ref{ARconjecture} holds true if $\NF (M) \subseteq W_1$ and $d \geqq 3$. 
This is automatically the case, for example,  whenever  $R$ is a normal Gorenstein local domain of dimension $3$.

%%%%%%%%%%%%%%%%%%%%%%%%%%%%%%%%
%%%%%%%%%%%%%%%%%%%%%%%%%%%%%%%%
\vspace{36pt}
%%%%%%%%%%%%%%%%%%%%%%%%%%%%%


\begin{thebibliography}{99}

\bibitem{A}
{\sc T. Araya},  
{\it The Auslander-Reiten conjecture for Gorenstein rings}. 
Proc. Amer. Math. Soc.  {\bf 137}  (2009),  no. 6, 1941--1944. 

\bibitem{Aus}
{\sc M.Auslander},
{\it Functors and morphisms determined by objects}.
Representation theory of algebras (Proc. Conf., Temple Univ., 1976), 1--244. 
Lecture Notes in Pure Appl. Math., {\bf37}, Dekker, New York, 1978.

\bibitem{Aus1} 
{\sc M.Auslander, S.Ding and $\O$.Solberg},
{\it  Liftings and weak liftings of modules}. 
 J.Algebra {\bf156} (1993), no. 2, 273--317.

\bibitem{Aus2}
{\sc  M.Auslander and I.Reiten},
{\it On a generalized version of the Nakayama conjecture}.
Proc. Amer. Math. Soc. {\bf52} (1975), 69--74.

\bibitem{Aus3}
{\sc M.Auslander and I.Reiten},
{\it Almost split sequence for Cohen-Macaulay modules}.
Math. Ann. {\bf277}, (1987), no. 2, 345--349.

\bibitem{BIK} 
{\sc D.Benson, S.B.Iyengar and H.Krause},
{\it Stratifying triangulated categories}. 
J.Topol. {\bf4} (2011), no. 3, 641-666.

\bibitem{RB}
{\sc R.-O.Buchweitz},
{\it Maximal Cohen-Macaulay modules and Tate-cohomology over Gorenstein rings}.
Manuscript, (1987), available from: http://hdl.handle.net/1807/16682.

\bibitem{F} 
{\sc H.-B.Foxby},
{\it Bounded complexes of flat modules}.
J. Pure Appl. Algebra {\bf15} (1979), no. 2, 149--172.

\bibitem{FI}
{\sc H.-B.Foxby and S.B.Iyengar},
{\it Depth and amplitude for unbounded complexes}.  
Commutative Algebra (Grenoble/Lyon, 2001),
%Contemporary Mathematics,  in Commutative algebra (Grenoble /Lyon, 2001), 
Contemp. Math., {\bf 331}, Amer. Math.Soc. (2003), 119--137.

\bibitem{PG}
{\sc P. Griffith},
{\it A representation theorem for complete local rings}.
J. Pure Appl. Algebra {\bf7} (1976), no. 3, 303--315.

\bibitem{H}
{\sc D.Happel}, 
{\it Triangulated categories in the representation theory of finite-dimensional algebras}. 
London Mathematical Society Lecture Note Series, {\bf 119}. 
Cambridge University Press, Cambridge, 1988. x+208 pp. 
ISBN: 0-521-33922-7.

\bibitem{Hartshorne}
{\sc R.Hartshorne}, 
{\it Residues and duality}. 
Lecture Notes in Mathematics, No. 20. 
Springer-Verlag, Berlin-New York 1966 vii+423 pp. 


\bibitem{HSV}
{\sc C.Huneke, L.M.\c{S}ega and A.N.Vraciu},
{\it Vanishing of Ext and Tor over some Cohen-Macaulay local rings}. 
Illinois  J. Math.{\bf 48}(2004), no. 1, 295--317.


\bibitem{IW}
{\sc O.Iyama and M.Wemyss}, 
{\it  Maximal modifications and Auslander-Reiten duality for non-isolated singularities}. 
Invent. Math.  {\bf 197} (2014),  no. 3, 521--586.


\bibitem{IY}
{\sc O.Iyama and Y.Yoshino}, 
{\it Mutation in triangulated categories and rigid Cohen-Macaulay modules}.  
Invent. Math. {\bf 172}  (2008), no. 1, 117--168. 


\bibitem{N}
{\sc A.Neeman},
{\it The chromatic tower for D(R)}. 
Topology {\bf 31} (1992), no. 3, 519--532. 

\bibitem{SE} 
{\sc L.M.\c{S}EGA},
{\it Vanishing of cohomology over Gorenstein rings of small codimension}. 
Proc. Amer. Math. Soc. {\bf 131} (2003), no. 8, 2313--2323.
    
\bibitem{Sh} 
{\sc R.Y.Sharp},
{\it Dualizing complexes for commutative Noetherian rings}. 
Math. Proc. Cambridge Philos. Soc. {\bf78} (1975), no. 3, 369--386.

\bibitem{Ybook}
{\sc Y.Yoshino}, 
{\it Cohen-Macaulay modules over Cohen-Macaulay rings}. 
London Mathematical Society Lecture Note Series, {\bf 146}.
 Cambridge University Press, Cambridge, 1990. viii+177 pp. 
 ISBN: 0-521-35694-6.

\end{thebibliography}
\end{document}